\newcommand{\idiot}[1]{\vspace{5 mm}\par \noindent
\marginpar{\textsc{For longer version}}
\framebox{\begin{minipage}[c]{.99 \textwidth}
#1 \end{minipage}}\vspace{5 mm}\par}
\newcommand{\todo}[1]{\vspace{5 mm}\par \noindent
\marginpar{\textsc{ToDo}}
\framebox{\begin{minipage}[c]{.99 \textwidth}
\tt #1 \end{minipage}}\vspace{5 mm}\par}
\renewcommand{\todo}[1]{}
\renewcommand{\idiot}[1]{}
\def\unprotectedboldentry#1{\textcolor{Red}{\textbf{#1}}}
\def\boldentry{\protect\unprotectedboldentry}
\newcommand{\tikztableau}[2][scale=0.6,every node/.style={font=\small}]{
    \def\newtableau{#2}
    \begin{array}{c}
    \begin{tikzpicture}[#1]
    \coordinate (x) at (-0.5,0.5);
    \coordinate (y) at (-0.5,0.5);
    \foreach \row in \newtableau {
        \coordinate (x) at ($(x)-(0,1)$);
        \coordinate (y) at (x);
        \foreach \entry in \row {
            \ifthenelse{\equal{\entry}{X}}
               {
                \node (y) at ($(y) + (1,0)$) {};
                \fill[color=gray!10] ($(y)-(0.5,0.5)$) rectangle +(1,1);
                \draw[color=gray] ($(y)-(0.5,0.5)$) rectangle +(1,1);
               }
               {
                \ifthenelse{\equal{\entry}{\boldentry X}}
                   {
                    \node (y) at ($(y) + (1,0)$) {};
                    \fill[color=gray] ($(y)-(0.5,0.5)$) rectangle +(1,1);
                    \draw ($(y)-(0.5,0.5)$) rectangle +(1,1);
                   }
                   {
                    \node (y) at ($(y) + (1,0)$) {\entry};
                    \draw ($(y)-(0.5,0.5)$) rectangle +(1,1);
                   }
               }
            }
        }
    \end{tikzpicture}
    \end{array}}
\newcommand{\tikztableausmall}[1]{\tikztableau[scale=0.45,every node/.style={font=\rm\small}]{#1}}
\def\sym{\operatorname{\mathsf{Sym}}}
\def\Qsym{\operatorname{\mathsf{QSym}}}
\def \fS{{\mathfrak S}}
\def \HH{{H}}
\def\Nsym{\operatorname{\mathsf{NSym}}}
\def\SS{\hat{S}}
\def\sort{\operatorname{sort}}
\newtheorem{Theorem}{Theorem}[section]
\newtheorem{Proposition}[Theorem]{Proposition}
\newtheorem{Lemma}[Theorem]{Lemma}
\newtheorem{Example}[Theorem]{Example}
\theoremstyle{definition}
\newtheorem{Remark}[Theorem]{Remark}
\newtheorem{Definition}[Theorem]{Definition}
\begin{document}

\title[Indecomposable modules for the dual immaculate basis]{Indecomposable modules for the dual immaculate basis of quasi-symmetric functions}

\author[C. Berg \and N. Bergeron \and F. Saliola \and L. Serrano \and M. Zabrocki]{Chris Berg$^2$ 
\and Nantel Bergeron$^{1,3}$ \and Franco Saliola$^2$ \and Luis Serrano$^2$ \and Mike Zabrocki$^{1,3}$}
\address[1]{Fields Institute\\ Toronto, ON, Canada}
\address[2]{Universit\'e du Qu\'ebec \`a Montr\'eal, Montr\'eal, QC, Canada}
\address[3]{York University\\ Toronto, ON, Canada}
\date{\today}

\begin{abstract}
We construct indecomposable modules for the $0$-Hecke algebra whose characteristics are the dual immaculate basis of the quasi-symmetric functions.
\end{abstract}

\maketitle
\setcounter{tocdepth}{3}

\section{Introduction}

The algebra of symmetric functions $\sym$ has an important basis formed by Schur functions, which appear throughout mathematics. For example, as the representatives for the Schubert 
classes in the cohomology of the Grassmannian, as the characters for the irreducible representations of the symmetric group and the general linear group, or as an orthonormal basis for the space of symmetric functions,
to name a few. The algebra $\Nsym$ of noncommutative symmetric functions projects under the forgetful map onto $\sym$, which injects into the algebra $\Qsym$ of quasi-symmetric functions. $\Nsym$ and $\Qsym$ are dual Hopf algebras.

In \cite{BBSSZ}, the authors developed a basis for $\Nsym$, which satisfied many of the combinatorial properties of Schur functions. This basis, called the \textit{immaculate basis} $\{\fS_\alpha\}$, projects onto Schur functions under the forgetful map. When 
indexed by a partition, the corresponding projection of the immaculate function is precisely the Schur function of the given partition.

The dual basis $\{\fS_\alpha^*\}$ is a basis for $\Qsym$. 
The main goal of this paper is to express the dual immaculate functions as characters of a representation, in the same way that Schur functions are the characters of the irreducible representations of the symmetric group. We achieve this in Theorem \ref{thm:repthry}, where we realize them as the characteristic of certain indecomposable representations of the \textit{0-Hecke algebra}.

\subsection{Acknowledgements}
This work is supported in part by  NSERC.
It is partially the result of a working session at the Algebraic
Combinatorics Seminar at the Fields Institute with the active
participation of C. Benedetti, J. S\'anchez-Ortega, O. Yacobi, E. Ens,  H. Heglin, D. Mazur and T. MacHenry.

This research was facilitated by computer exploration using the open-source
mathematical software \texttt{Sage}~\cite{sage} and its algebraic
combinatorics features developed by the \texttt{Sage-Combinat}
community~\cite{sage-co}.
\section{Prerequisites}
\subsection{The symmetric group}

The symmetric group $S_n$ is the group generated by the set of $\{ s_1, s_2, \ldots, s_{n-1}\}$
satisfying the following relations:

\vspace{-.23in}
\begin{align*}
s_i^2 &= 1;\\
 s_i s_{i+1}s_i &= s_{i+1}s_i s_{i+1};\\
s_i s_j &= s_j s_i \textrm{ if } |i-j| > 1.
\end{align*}

\subsection{Compositions and combinatorics} \label{sec:compositions}

A \textit{partition} of a non-negative integer $n$ is a tuple
$\lambda = [\lambda_1, \lambda_2, \dots, \lambda_m]$ of positive integers satisfying 
$\lambda_1 \geq \lambda_2 \geq \cdots \geq \lambda_m$ which sum to $n$. If $\lambda$ is a partition of $n$, one writes 
$\lambda \vdash n$. (When needed, we will consider partitions with zeroes at the end, but they are equivalent to the underlying partition made of positive numbers.) Partitions are of particular importance in 
algebraic combinatorics, as they index a basis 
for the symmetric functions of degree $n$, $\sym_n$, and the character ring 
for the representations of the symmetric group $S_n$, among others. These concepts are intimately 
connected; we assume the reader is well versed in this area (see for instance \cite{Sagan} for background details).

A \textit{composition} of a non-negative integer $n$ is a tuple 
$\alpha = [\alpha_1, \alpha_2, \dots, \alpha_m]$ of positive 
integers which sum to $n$. If $\alpha$ is a composition of $n$, one writes $\alpha \models n$.
The entries $\alpha_i$ of the composition are referred to as the parts
of the composition.  The size of the composition is the sum of the parts
and will be denoted $|\alpha|$.  The length of the composition is the
number of parts and will be denoted $\ell(\alpha)$. Note that $|\alpha|=n$ and $\ell(\alpha)=m$.

Compositions of $n$ are in bijection with subsets of $\{1, 2, \dots, n-1\}$. 
We will follow the convention of identifying $\alpha = [\alpha_1, \alpha_2, \dots, \alpha_m]$ with the subset ${\mathcal S}(\alpha) = 
\{\alpha_1, \alpha_1+\alpha_2, \alpha_1+\alpha_2 + \alpha_3, \dots, \alpha_1+\alpha_2+\dots + \alpha_{m-1} \}$. 

If $\alpha$ and
$\beta$ are both compositions of $n$,  say that $\alpha \leq  \beta$ in refinement order if 
${\mathcal S}(\beta) \subseteq {\mathcal S}(\alpha)$. For instance, $[1,1,2,1,3,2,1,4,2] \leq [4,4,2,7]$, 
since ${\mathcal S}([1,1,2,1,3,2,1,4,2]) = \{1,2,4,5,8,10,11,15\}$ and ${\mathcal S}([4,4,2,7]) = \{4,8,10\}$.

In this presentation, compositions will be represented as diagrams of left adjusted rows of cells.
We will also use the matrix convention (`English' notation)
that the first row of the diagram is at the top and the last row is at the bottom.  For example, the composition
$[4,1,3,1,6,2]$ is represented as
\[ \tikztableausmall{{X, X, X, X},{X}, {X, X, X}, {X}, {X,X,X,X,X,X}, {X, X}}~.
\]

\subsection{Symmetric functions}
We let $\sym$ denote the ring of symmetric functions. As an algebra, $\sym$ is the ring over 
$\mathbb{Q}$ freely generated by commutative elements $\{h_1, h_2, \dots\}$. $\sym$ has a grading, defined by giving 
$h_i$ degree $i$ and extending multiplicatively. A natural basis for the degree $n$ component of $\sym$ are the complete 
homogeneous symmetric functions of degree 
$n$, $\{ h_\lambda := h_{\lambda_1} h_{\lambda_2} \cdots h_{\lambda_m} : \lambda \vdash n\}$. 
$\sym$ can be realized as the ring of invariants of the ring of power series of bounded degree 
$\mathbb{Q}[\![x_1, x_2, \dots]\!]$ in commuting variables $\{x_1, x_2, \dots\}$. Under this identification, 
$h_i$ denotes the sum of all monomials in the $x$ variables of degree $i$.

\subsection{Non-commutative symmetric functions} 

$\Nsym$ is a non-commutative analogue of $\sym$, the algebra of symmetric functions, that arises by
considering an algebra with one non-commutative generator at each positive
degree.  In addition to the relationship with the symmetric functions,
this  algebra  has  links  to  Solomon's  descent  algebra  in  type  $A$  \cite{MR},
the  algebra  of  quasi-symmetric  functions  \cite{MR},  and representation theory
of  the  type  $A$  Hecke  algebra  at  $q=0$  \cite{KT}.  It is an example
of a combinatorial Hopf algebra \cite{ABS}.  While we will follow the foundational
results  and  definitions  from  references  such  as  \cite{GKLLRT,MR},  we  have  chosen
to use notation here which is suggestive of analogous results in $\sym$.

We consider $\Nsym$ as the algebra with generators $\{\HH_1, \HH_2, \dots \}$ and 
no relations. Each generator $H_i$ is defined to be of degree $i$, 
giving $\Nsym$ the structure of a graded algebra. We let $\Nsym_n$ denote the 
graded component of $\Nsym$ of degree $n$. A basis for $\Nsym_n$ are the 
\textit{complete homogeneous functions} 
$\{\HH_\alpha := \HH_{\alpha_1} \HH_{\alpha_2} \cdots \HH_{\alpha_m}\}_{\alpha \vDash n}$ 
indexed by compositions of $n$.

\subsection{Immaculate tableaux}

\begin{Definition}
Let $\alpha$ and $\beta$ be compositions. An \emph{immaculate tableau} of shape
$\alpha$ and content $\beta$ is a labelling of the boxes of the diagram of
$\alpha$ by positive integers in such a way that:
\begin{enumerate}
\item the number of boxes labelled by $i$ is $\beta_i$;
\item the sequence of entries in each row, from left to right, is weakly increasing;
\item the sequence of entries in the \emph{first} column, from top to bottom,
    is increasing.
\end{enumerate}

An immaculate tableau is said to be \emph{standard} if it has content
$1^{|\alpha|}$.

Let $K_{\alpha, \beta}$ denote the number of immaculate tableaux of shape
$\alpha$ and content $\beta$.
\end{Definition}

We re-iterate that aside from the first column, there is no relation on the other
columns of an immaculate tableau. 

\begin{Example}\label{ex:immaculatetableau}
The five immaculate tableau of shape $[4,2,3]$ and content $[3,1,2,3]$: 
\[ \tikztableausmall{{1,1, 1, 3},{2, 3}, {4,4,4}} 
\tikztableausmall{{1,1, 1, 3},{2, 4}, {3,4,4}} 
\tikztableausmall{{1,1, 1, 4},{2,3}, {3,4,4}} 
\tikztableausmall{{1,1, 1, 4},{2, 4}, {3,3,4}} 
\tikztableausmall{{1,1, 1, 2},{3, 3}, {4,4,4}} 
\]
\end{Example}

\begin{Definition} \label{def:descentSIT}
We say that a standard immaculate tableau
$T$ has a descent in position $i$
if  $i+1$ is in a row strictly below the row containing $i$. The \emph{descent composition}, denoted $D(T)$, is the composition corresponding to the set of descents in $T$.
\end{Definition}

\begin{Example}
The standard immaculate tableau of shape $[6,5,7]:$
$$T = \tikztableausmall{{1,2,4,5, 10,11},{3, 6, 7, 8, 9}, {12,13,14,15,16, 17, 18}} $$
has descents in positions
    $\{2, 5, 11\}$.
    The descent composition of $T$ is then $D(T) = [2,3,6,7]$.
\end{Example}

\subsection{The immaculate basis of $\Nsym$}

The immaculate basis of $\Nsym$ was introduced in \cite{BBSSZ}. It shares many properties with the 
Schur basis of $\sym$. We define\footnote{This is not the original definition, 
but is equivalent by Proposition 3.16 in \cite{BBSSZ}.} the immaculate basis 
$\{ \fS_\alpha\}_{\alpha}$ as the unique elements of $\Nsym$ satisfying:
\[ \HH_\beta = \sum_{\alpha} K_{\alpha, \beta} \fS_\alpha.\]

\begin{Example}
Continuing from Example \ref{ex:immaculatetableau}, we see that \[ \HH_{3123} = \cdots + 5 \fS_{423} + \cdots.\]
\end{Example}

We will not attempt to summarize everything that is known about this basis, but instead 
refer the reader to \cite{BBSSZ} and \cite{BBSSZ2}.

\section{Modules for the dual immaculate basis}

In this section we will construct indecomposable modules for the $0$-Hecke algebra whose 
characteristic is a dual immaculate quasi-symmetric function.

\subsection{Quasi-symmetric functions}

The algebra of quasi-symmetric functions, $\Qsym$, was introduced in \cite{Ges} 
(see also subsequent
references such as \cite{GR, Sta84}). The  graded  component  $\Qsym_n$  is  indexed  by  compositions  of  $n$. 
The algebra is most readily realized as a subalgebra of the 
ring of power series of bounded degree 
$\mathbb{Q}[\![x_1, x_2, \dots]\!]$, and the monomial 
quasi-symmetric function indexed by a composition $\alpha$ is defined as
\begin{equation}
    \label{monomial-qsym}
    M_\alpha = \sum_{i_1 < i_2 < \cdots < i_m} x_{i_1}^{\alpha_1} x_{i_2}^{\alpha_2} \cdots x_{i_m}^{\alpha_m}.
\end{equation}
The algebra of quasi-symmetric functions, $\Qsym$, can be defined as the linear span of the monomial quasi-symmetric functions. These, in fact, form a basis of $\Qsym$, and their multiplication is inherited from $\mathbb{Q}[\![x_1, x_2, \dots]\!]$. 
We view $\sym$ as a subalgebra of $\Qsym$. In fact, the quasi-symmetric monomial functions
refine the usual monomial symmetric functions $m_\lambda \in \sym$:
\[ m_\lambda = \sum_{\sort(\alpha) = \lambda} M_\alpha,\]
where $\sort(\alpha)$ denotes the partition obtained by organizing the parts of $\alpha$ from the largest to the smallest.

The fundamental quasi-symmetric function, denoted $F_\alpha$,  is defined by its expansion
in the monomial quasi-symmetric basis: 
\[F_\alpha = \sum_{\beta \leq \alpha} M_\beta.\]

The algebras $\Qsym$ and $\Nsym$ form dual graded Hopf algebras. In this context, the monomial basis of $\Qsym$ 
is dual to the complete homogeneous basis of $\Nsym$. Duality can be expressed by the means of an inner product, for which $\langle H_\alpha,M_\beta \rangle = \delta_{\alpha,\beta}$.

In \cite{BBSSZ}, we studied the dual basis to the immaculate functions of $\Nsym$, denoted $\fS_\beta^*$ and indexed by compositions. They are the basis of $\Qsym$ defined by 
$\langle \fS_\alpha, \fS_\beta^*\rangle = \delta_{\alpha, \beta}$.
In \cite[Proposition 3.37]{BBSSZ}, we showed that the dual immaculate functions have the following positive expansion into the fundamental basis:

\begin{Proposition}\label{prop:FundamentalPositive}
The dual immaculate functions $\fS_\alpha^*$ are fundamental positive. Specifically they expand as 
\[ \fS_\alpha^* = \sum_{T}  F_{D(T)},\] a sum over all standard immaculate tableaux of shape $\alpha$.
\end{Proposition}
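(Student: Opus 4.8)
The plan is to first translate the statement into the monomial basis and then establish a standardization bijection on tableaux. Since $\{M_\beta\}$ is dual to $\{\HH_\beta\}$ while $\{\fS_\beta^*\}$ is dual to $\{\fS_\beta\}$, the transition on the dual side is governed by the same matrix $K$. Concretely, first I would compute, using $\HH_\beta = \sum_\gamma K_{\gamma,\beta}\fS_\gamma$,
\[ \langle \HH_\beta, \fS_\alpha^* \rangle = \Big\langle \sum_\gamma K_{\gamma,\beta}\fS_\gamma,\ \fS_\alpha^* \Big\rangle = K_{\alpha,\beta}, \]
and since $\{M_\beta\}$ is the dual basis to $\{\HH_\beta\}$, so that $f = \sum_\beta \langle \HH_\beta, f\rangle\, M_\beta$ for every $f \in \Qsym$, this gives the monomial expansion $\fS_\alpha^* = \sum_\beta K_{\alpha,\beta}\, M_\beta$.

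Next, expanding each fundamental function on the right-hand side of the claimed identity via $F_{D(T)} = \sum_{\beta \,\leq\, D(T)} M_\beta$, where $\beta \leq D(T)$ means $\mathcal S(D(T)) \subseteq \mathcal S(\beta)$, the statement reduces to the purely combinatorial identity
\[ K_{\alpha,\beta} = \#\{\, T \text{ standard immaculate of shape } \alpha : \mathcal S(D(T)) \subseteq \mathcal S(\beta) \,\}. \]
To prove this I would exhibit a bijection between immaculate tableaux of shape $\alpha$ and content $\beta$, and standard immaculate tableaux of shape $\alpha$ whose descent set is contained in $\mathcal S(\beta)$. The forward map is standardization: the $\beta_i$ cells holding the value $i$ are relabelled by the consecutive integers of the $i$-th block $\{\beta_1 + \cdots + \beta_{i-1} + 1, \ldots, \beta_1 + \cdots + \beta_i\}$, assigned to those cells proceeding from the bottom row upward and left to right within each row. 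The inverse collapses the labels of the $i$-th block back to the value $i$.

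The routine verifications are that standardization yields a valid standard immaculate tableau (rows remain increasing by the left-to-right rule, and the first column remains increasing because distinct values occupy distinct blocks) and that, by the bottom-up left-to-right ordering, consecutive labels within one block are never descents; hence every descent sits at a block boundary and $\mathcal S(D(T)) \subseteq \mathcal S(\beta)$. The step demanding the most care — and the main obstacle — is checking that the inverse map always returns a \emph{valid} immaculate tableau, i.e.\ one with strictly increasing first column. I would argue by contradiction: if the first-column cells in rows $r$ and $r+1$ carried labels $c_r < c_{r+1}$ lying in a common block, then no element of $\mathcal S(\beta)$, and hence (by hypothesis) no descent of $T$, would occur in $\{c_r, \ldots, c_{r+1}-1\}$; but an unbroken run of non-descents starting at $c_r$ keeps every subsequent entry weakly above row $r$, forcing $c_{r+1}$ out of row $r+1$, a contradiction. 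The same non-descent analysis shows that within each block the labels of $T$ already increase in the prescribed bottom-up left-to-right order, confirming that the two maps are mutually inverse and completing the proof.
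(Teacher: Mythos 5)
Your proof is correct. One point about the comparison: this paper does not prove Proposition \ref{prop:FundamentalPositive} itself---it is imported from \cite[Proposition 3.37]{BBSSZ}---and your argument, namely duality giving $\langle \HH_\beta, \fS_\alpha^*\rangle = K_{\alpha,\beta}$ and hence $\fS_\alpha^* = \sum_\beta K_{\alpha,\beta} M_\beta$, followed by a standardization bijection showing that $K_{\alpha,\beta}$ counts standard immaculate tableaux of shape $\alpha$ with descent set contained in $\mathcal{S}(\beta)$, is essentially the standard argument used in that reference. You also correctly isolated the only delicate step: that collapsing the blocks of a standard immaculate tableau whose descents all lie in $\mathcal{S}(\beta)$ returns a tableau whose first column is still strictly increasing, and your chain-of-non-descents contradiction (no descent in a block forces each successive label into a weakly higher row, so two first-column entries of consecutive rows cannot share a block) settles it; the same observation shows the block labels occur in the bottom-up, left-to-right order, so the maps are mutually inverse.
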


\subsection{Finite dimensional representation theory of $H_n(0)$}

We will outline the study of the finite dimensional representations of the $0$-Hecke algebra and its relationship to $\Qsym$. 
We begin by defining the $0$-Hecke algebra. We refer the reader to \cite[Section 5]{Th2} for the relationship between 
the generic Hecke algebra and the $0$-Hecke algebra and their connections to representation theory.

\begin{Definition} The Hecke algebra $H_n(0)$ is generated by the elements $\pi_1, \pi_2, \dots \pi_{n-1}$ subject to relations:
\begin{align*}
\pi_i^2 &= \pi_i;\\
 \pi_i \pi_{i+1}\pi_i &= \pi_{i+1}\pi_i \pi_{i+1};\\
\pi_i \pi_j &= \pi_j \pi_i \textrm{ if } |i-j| > 1.
\end{align*}
\end{Definition}
A basis of $H_n(0)$ is given by the elements $\{ \pi_\sigma : \sigma \in S_n \}$, 
where $\pi_\sigma = \pi_{i_1} \pi_{i_2} \cdots \pi_{i_m}$ if $\sigma = s_{i_1} s_{i_2}\cdots s_{i_m}$.

We let $G_0(H_n(0))$ denote the Grothendieck group of finite dimensional representations of $H_n(0)$. 
As a vector space, $G_0(H_n(0))$ is spanned by the finite dimensional representations of $H_n(0)$, 
with the relation on isomorphism classes $[B] = [A]+[C]$ whenever there is a short exact sequence of $H_n(0)$-representations 
$0\rightarrow A \rightarrow B \rightarrow C \rightarrow 0$.
We let \[ \mathcal{G} = \bigoplus_{n \geq 0} G_0(H_n(0)).\] 

The irreducible representations of $H_n(0)$ are indexed by compositions. 
The irreducible representation corresponding to the composition $\alpha$ is 
denoted $L_\alpha$. The collection $\{ [L_\alpha] \}$ 
forms a basis for $\mathcal{G}$. As shown in Norton \cite{N}, each irreducible 
representation is one dimensional, spanned by a non-zero vector $v_\alpha \in L_\alpha$, 
and is determined by the action of the generators on $v_\alpha$:
\begin{equation}
 \pi_i v_\alpha = \begin{cases} 
      0 & \textrm{ if $i \in \mathcal{S}(\alpha)$}; \\
      v_\alpha & \textrm{ otherwise}, \\
   \end{cases}
\end{equation}
where $\mathcal{S}(\alpha)$ denotes the subset of $[1 \dots n-1]$ corresponding to the composition $\alpha$.
The tensor  product $H_n(0) \otimes H_m(0)$ is naturally embedded as a subalgebra 
of $H_{n+m}(0)$. Under this identification, one can endow $\mathcal{G}$ with a ring structure; 
for $[N] \in G_0(H_n(0))$ and $[M] \in G_0(H_m(0))$, let \[ [N][M] := [Ind_{H_n(0) \otimes H_m(0)}^{H_{n+m}(0)} N \otimes M]\]
where induction is defined in the usual manner.

There is an important linear map $\mathcal{F}: \mathcal{G} \rightarrow \Qsym$ defined by $\mathcal{F}([L_\alpha])
 = F_\alpha$. For a module $M$, $\mathcal{F}([M])$ is called the \textit{characteristic of $M$}.
  
\begin{Theorem}[Duchamp, Krob, Leclerc, Thibon \cite{DKLT}]
The quasi-symmetric functions and the Grothendieck group of finite dimensional representations of 
$H_n(0)$ are isomorphic as rings. The map $\mathcal{F}$ is an isomorphism between $\mathcal{G}$ and $\Qsym$.
\end{Theorem}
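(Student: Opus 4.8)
The plan is to establish that $\mathcal{F}$ is a graded ring isomorphism by separating two assertions: that it is a linear bijection, and that it respects products.

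First, the linear bijection, which is routine. Since $\mathcal{F}$ is defined on the basis $\{[L_\alpha]\}$ of $\mathcal{G}$ by $[L_\alpha]\mapsto F_\alpha$, it is automatically a well-defined linear map: the classes $[L_\alpha]$ form a basis of $G_0(H_n(0))$, and in the Grothendieck group the class of any module equals the sum of the classes of its composition factors, so additivity over short exact sequences holds by construction. The family $\{F_\alpha\}_{\alpha\models n}$ is a basis of $\Qsym_n$: from $F_\alpha=\sum_{\beta\le\alpha}M_\beta$ the transition matrix to the monomial basis $\{M_\alpha\}$ is unitriangular for refinement order, hence invertible. Thus $\mathcal{F}$ carries a basis to a basis in each degree and is a graded linear isomorphism.

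The substance is multiplicativity. Since both products are graded and bilinear and $\mathcal{F}$ is linear, it suffices to check it on basis elements, that is, to prove for all compositions $\alpha,\beta$ that
\[
\mathcal{F}\bigl([L_\alpha][L_\beta]\bigr) = F_\alpha F_\beta.
\]
By definition $[L_\alpha][L_\beta]=[\operatorname{Ind}_{H_n(0)\otimes H_m(0)}^{H_{n+m}(0)}(L_\alpha\otimes L_\beta)]$, and in the Grothendieck group this class equals $\sum_\gamma m_\gamma[L_\gamma]$, where $m_\gamma$ is the multiplicity of $L_\gamma$ as a composition factor of the induced module. Applying $\mathcal{F}$ yields $\sum_\gamma m_\gamma F_\gamma$, so the identity reduces to the combinatorial claim that the composition factors of the induced module, counted with multiplicity, reproduce the fundamental expansion of $F_\alpha F_\beta$.

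To compute the left-hand side I would use the standard fact that $H_{n+m}(0)$ is free as a right $H_n(0)\otimes H_m(0)$-module on $\{\pi_w\}$, where $w$ ranges over the minimal-length representatives of the cosets of $S_n\times S_m$ in $S_{n+m}$ (the $(n,m)$-shuffles); hence the induced module has basis $\{\pi_w\otimes(v_\alpha\otimes v_\beta)\}$ of dimension $\binom{n+m}{n}$. The key step is to order this basis by a statistic (refining the descent set of $w$ to a total order compatible with length) so that each generator $\pi_i$ acts in upper-triangular fashion; the associated graded of the resulting filtration is then a direct sum of one-dimensional modules, each isomorphic to an $L_\gamma$ whose $\gamma$ is read off as the descent composition of the corresponding shuffle (incorporating the internal descents prescribed by $\alpha$ and $\beta$). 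On the right-hand side, $F_\alpha F_\beta$ is governed by the classical shuffle description of the product of fundamental quasi-symmetric functions (Gessel): representing $\alpha$ and $\beta$ by standardized words on two disjoint consecutive alphabets with descent compositions $\alpha$ and $\beta$, the product is the sum of $F_\gamma$ over all shuffles $w$ of the two words, with $\gamma=\Des$-composition of $w$. The proof is finished by matching the two shuffle descriptions. I expect the main obstacle to be precisely this representation-theoretic computation — proving that the $\pi_i$ act triangularly on the shuffle basis and identifying each associated graded piece with the correct $L_\gamma$ — since the remaining ingredients (the unitriangular basis argument and the shuffle identity in $\Qsym$) are either routine or classical.
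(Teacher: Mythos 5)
The paper contains no proof of this statement to compare against: it is imported as an attributed theorem from \cite{DKLT}, and the paper's entire ``proof'' is that citation. So your proposal can only be measured against the literature it reconstructs --- and what you describe is, in outline, exactly the standard Duchamp--Krob--Leclerc--Thibon/Krob--Thibon argument. The linear-isomorphism half (simples form a basis of the Grothendieck group; $F_\alpha=\sum_{\beta\le\alpha}M_\beta$ is unitriangular over the monomial basis) is correct and routine, as you say. The multiplicative half is also the right reduction: compute the composition factors of $\operatorname{Ind}_{H_n(0)\otimes H_m(0)}^{H_{n+m}(0)}(L_\alpha\otimes L_\beta)$ and match them against Gessel's shuffle expansion of $F_\alpha F_\beta$.

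The step you defer --- triangularity of the $\pi_i$-action on the basis $\{\pi_w\otimes v\}$ --- does go through, and more simply than your phrasing suggests: filtering by the length of $w$ alone suffices, with no refinement by descent statistics. For $w$ a minimal-length coset representative, exactly one of three things happens: $s_iw<w$, in which case $\pi_i\pi_w=\pi_w$; or $s_iw>w$ and $s_iw$ is again a minimal coset representative, in which case $\pi_i\pi_w=\pi_{s_iw}$; or $s_iw=ws_j>w$ with $s_j\in S_n\times S_m$ (Deodhar's lemma), in which case $\pi_i(\pi_w\otimes v)=\pi_w\otimes\pi_jv$, which is $\pi_w\otimes v$ or $0$ according to whether $j$ lies outside or inside $\mathcal{S}(\alpha)\cup(\mathcal{S}(\beta)+n)$. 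Hence the span of $\{\pi_w\otimes v:\ell(w)\ge k\}$ is a submodule, the associated graded is a direct sum of one-dimensional modules, and recording for which $i$ the class of $\pi_w\otimes v$ is killed identifies each piece as $L_{\gamma(w)}$ with $\gamma(w)$ the descent composition of the shuffle word attached to $w$; Gessel's formula then closes the loop. So: right approach, correctly identified hard step, no step that would fail --- but as written it is a proof plan rather than a proof, since the central computation is asserted rather than executed.
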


\begin{Remark} 
The map $\mathcal{F}$ is actually an isomorphism of graded Hopf algebras. We will not make use of the coalgebra structure.
\end{Remark}

\subsection{A representation on $\mathcal{Y}$-words} We start by defining the analogue of a permutation module for $H_n(0)$. 
For a composition $\alpha = [\alpha_1, \alpha_2, \dots, \alpha_m] \models n$, we let $\mathcal{M}_\alpha$ denote
  the vector space spanned by words of length $n$ 
on $m$ letters with content $\alpha$ (so that $j$ appears $\alpha_j$ times in each word). 
The action of $H_n(0)$ on a word $w = w_1 w_2 \cdots w_n$ is defined on generators as:
\begin{equation}\label{eq:actionwords}
\pi_i w = \begin{cases} 
      w & \textrm{ if $w_i  \geq w_{i+1}$}; \\
      s_i(w) & \textrm{if $w_i < w_{i+1}$}; 
   \end{cases}
\end{equation}
where $s_i(w) = w_1 w_2 \cdots w_{i+1}w_i \cdots w_n$.
This is isomorphic to the representation: 
\[ Ind^{H_n(0)}_{H_{\alpha}(0)} 
\left(L_{\alpha_1} \otimes L_{\alpha_2} \otimes \cdots \otimes L_{\alpha_m}\right),\]
where $L_k$ is the one-dimensional representation indexed by the composition $[k]$ and
$H_\alpha(0) := H_{\alpha_1}(0) \otimes H_{\alpha_2}(0) \otimes \cdots \otimes H_{\alpha_m}(0)$. This can be seen by associating the element 
$\pi_v \otimes_{H_\alpha(0)} 
L_{\alpha_1} \otimes L_{\alpha_2} \otimes \cdots \otimes L_{\alpha_m}$
where $v$ is the minimal length left coset representative of $S_n / S_{\alpha_1} \times S_{\alpha_2} \times \cdots \times S_{\alpha_m}$
with the element $\pi_v (1^{\alpha_1} 2^{\alpha_2} \cdots k^{\alpha_k})$.

We call a word a \textit{$\mathcal{Y}$-word} if the first instance of $j$ appears before the first instance of $j+1$ for every $j$.
We let $\mathcal{N}_\alpha$ denote the subspace of $\mathcal{M}_\alpha$ consisting of all words that are not 
$\mathcal{Y}$-words. The action of $H_n(0)$ on $\mathcal{M}_\alpha$ will never move a $j+1$ to the right of a $j$. 
This implies that $\mathcal{N}_\alpha$ is a submodule of $\mathcal{M}_\alpha$. 
The object of our interest is the quotient module $\mathcal{V}_\alpha := \mathcal{M}_\alpha / \mathcal{N}_\alpha$. We now state our main result.

\begin{Theorem}\label{thm:repthry}
The characteristic of $\mathcal{V}_\alpha$ is the dual immaculate function indexed by 
$\alpha$, i.e. $\mathcal{F}([\mathcal{V}_\alpha]) = \fS^*_\alpha$.
\end{Theorem}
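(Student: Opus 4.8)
The plan is to compute the class $[\mathcal{V}_\alpha]$ in the Grothendieck group $\mathcal{G}$ explicitly, by stratifying $\mathcal{V}_\alpha$ into a filtration whose successive quotients are Norton's one-dimensional irreducibles, and then to match the resulting sum of fundamentals against Proposition~\ref{prop:FundamentalPositive}. Since $\mathcal{N}_\alpha$ is spanned by the non-$\mathcal{Y}$-words, the images of the $\mathcal{Y}$-words of content $\alpha$ form a basis of $\mathcal{V}_\alpha$; I write $w$ for both a $\mathcal{Y}$-word and its class. For such a word put $\operatorname{Asc}(w) = \{\,i : w_i < w_{i+1}\,\}$ and let $\beta(w)$ be the composition of $n$ with $\mathcal{S}(\beta(w)) = \operatorname{Asc}(w)$. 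By the action~\eqref{eq:actionwords}, $\pi_i w = w$ exactly when $i \notin \operatorname{Asc}(w)$, while for $i \in \operatorname{Asc}(w)$ one has $\pi_i w = s_i(w)$.

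The key device is the inversion statistic $\operatorname{inv}(w) = \#\{(p,q) : p<q,\ w_p > w_q\}$. Swapping an adjacent ascent flips the inversion status of exactly one pair, so $\operatorname{inv}(s_i(w)) = \operatorname{inv}(w)+1$ whenever $i \in \operatorname{Asc}(w)$. Hence for each $c\ge 0$ the span $M_{\ge c}$ of the $\mathcal{Y}$-words with at least $c$ inversions is a submodule: a generator either fixes such a word, raises its inversion number by one (when $s_i(w)$ is again a $\mathcal{Y}$-word), or kills it (when $s_i(w)$ is not a $\mathcal{Y}$-word, using that $\mathcal{N}_\alpha$ is a submodule). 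This gives a finite filtration $\mathcal{V}_\alpha = M_{\ge 0} \supseteq M_{\ge 1} \supseteq \cdots$ whose $c$-th layer $M_{\ge c}/M_{\ge c+1}$ has basis the $\mathcal{Y}$-words with exactly $c$ inversions, and on which every generator acts \emph{diagonally}: $\pi_i w = w$ if $i \notin \operatorname{Asc}(w)$, and $\pi_i w = 0$ if $i \in \operatorname{Asc}(w)$, since then $s_i(w)$ has one more inversion (or is a non-$\mathcal{Y}$-word) and vanishes in the layer. Comparing with Norton's description, the span of each such $w$ is a copy of $L_{\beta(w)}$, whence
\[ [\mathcal{V}_\alpha] = \sum_{w} [L_{\beta(w)}] \qquad\text{and}\qquad \mathcal{F}([\mathcal{V}_\alpha]) = \sum_{w} F_{\beta(w)}, \]
the sums running over all $\mathcal{Y}$-words $w$ of content $\alpha$.

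It remains to identify this sum with $\sum_T F_{D(T)}$. For this I would use the map sending a word $w$ of content $\alpha$ to the filling $T_w$ of the diagram of $\alpha$ in which the positions $p$ with $w_p = j$ are entered, in increasing order from left to right, into row $j$. Rows are increasing by construction, and the first-column entries are the positions of the first occurrences of $1,2,\dots,m$ in $w$; these increase down the column precisely when $w$ is a $\mathcal{Y}$-word. Thus $w \mapsto T_w$ is a bijection from $\mathcal{Y}$-words of content $\alpha$ to standard immaculate tableaux of shape $\alpha$. As $p$ sits in row $w_p$ of $T_w$, the entry $p+1$ lies strictly below $p$ iff $w_p < w_{p+1}$, so $\operatorname{Asc}(w)$ is exactly the descent set of $T_w$ and $\beta(w) = D(T_w)$. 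Substituting and invoking Proposition~\ref{prop:FundamentalPositive} yields $\mathcal{F}([\mathcal{V}_\alpha]) = \sum_T F_{D(T)} = \fS^*_\alpha$.

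The argument is largely formal once one settles on $\operatorname{inv}$ as the stratifying statistic, and the only step needing genuine care is the verification that the action on each layer is \emph{diagonal} rather than merely triangular. This is where strict monotonicity is essential: because applying $\pi_i$ on an ascent increases $\operatorname{inv}$ by exactly one, $s_i(w)$ can never be another $\mathcal{Y}$-word of the same inversion number, so no off-diagonal contribution survives in $M_{\ge c}/M_{\ge c+1}$. I would also check the degenerate cases explicitly — equal adjacent letters, and ascents whose swap leaves the $\mathcal{Y}$-locus — to confirm the layer action matches Norton's irreducible exactly and that $\mathcal{N}_\alpha$ being a submodule is used correctly.
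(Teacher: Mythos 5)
Your proof is correct, and it follows the same overall strategy as the paper: build a filtration of $\mathcal{V}_\alpha$ whose composition factors are Norton's one-dimensional irreducibles $L_{D(T)}$, one for each standard immaculate tableau $T$ of shape $\alpha$, and then invoke Proposition~\ref{prop:FundamentalPositive}. The difference lies in how the filtration is produced. The paper works with tableaux and defines a relation $S \prec T$ whenever $\pi_\sigma(T) = S$ for some $\sigma$, extends it (arbitrarily) to a total order $T_1, \dots, T_m$, and takes $\mathcal{M}_{T_j}$ to be the span of $T_1,\dots,T_j$; the successive quotients are then one-dimensional. You instead filter by the inversion statistic, $M_{\ge 0} \supseteq M_{\ge 1} \supseteq \cdots$, so your layers are semisimple (direct sums of irreducibles, one per $\mathcal{Y}$-word of a fixed inversion number) rather than one-dimensional --- which is equally good for a Grothendieck-group computation. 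Your device is arguably the more self-contained one: the fact that $\pi_i$ applied at an ascent increases $\operatorname{inv}$ by exactly one is precisely what guarantees that the paper's relation $\prec$ is antisymmetric (no cycles), hence extendable to a total order, and that each $\mathcal{M}_{T_j}$ is closed under the action; the paper compresses this point into the remark that ``the $\pi_i$ are not invertible,'' whereas you prove it. Your translation between $\mathcal{Y}$-words and tableaux (ascents of $w$ correspond to descents of $\mathcal{T}(w)$) is the same bijection $\mathcal{T}$ the paper sets up before its proof, so the two arguments agree on that point as well.
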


Before we prove this we will associate words to standard immaculate tableaux and give an equivalent description 
of the $0$-Hecke algebra on standard immaculate tableau. To a $\mathcal{Y}$-word $w$, we associate the unique 
standard immaculate tableau $\mathcal{T}(w)$ which has a $j$ in row $w_j$.

\begin{Example}\label{bijectionYwords}
Let $w = 112322231$ be the $\mathcal{Y}$-word of content $[3,4, 2]$. Then $\mathcal{T}(w)$ is the standard immaculate tableau:
\[ \tikztableausmall{{1,2,9},{3,5,6,7},{4,8}} \]
\end{Example}

\begin{Remark} $\mathcal{T}$
 yields a bijection between standard immaculate tableau and $\mathcal{Y}$-words.
\end{Remark}

\begin{Remark}
In the case of the symmetric group, the irreducible representation corresponding to the partition 
$\lambda$ has a basis indexed by standard tableaux. Under the same map $\mathcal{T}$, standard Young 
tableaux are in bijection with Yamanouchi words (words for which every prefix contains at least as many 
$j$ as $j+1$ for every $j$). In this sense, $\mathcal{Y}$-words are a natural analogue to 
Yamanouchi words in our setting. The Specht modules that give rise to the indecomposable module of the symmetric group 
are built as a quotient of  ${\mathcal M}_\lambda$. Under the Frobenius map, these modules are associated to Schur functions. 
\end{Remark}

We may now describe the action of $H_n(0)$ on $\mathcal{V}_\alpha$, identifying the set of standard immaculate tableaux as the basis. 
Specifically, for a tableau $T$ and a generator $\pi_i$, we let:

\begin{equation}\label{eq:actiontableaux}
 \pi_i (T) = \begin{cases} 
      0 & \textrm{ if $i$ and $i\!+\!1$ are in the first column of $T$} \\
      T & \textrm{ if $i$ is in a row weakly below the row containing $i\!+\!1$}\\
      s_i(T) & \textrm{ otherwise};
   \end{cases}
\end{equation}
where $s_i(T)$ is the tableau that differs from $T$ by swapping the letters $i$ and $i+1$.

\begin{Example}
Continuing from Example \ref{bijectionYwords}, we see that $\pi_1, \pi_4, \pi_5, \pi_6, \pi_8$ send $T$ to itself, 
$\pi_3$ sends $T$ to $0$ and $\pi_2, \pi_7$ send $T$ to the following tableaux:
\[ \pi_2(T) = \tikztableausmall{{1,3,9},{2,5,6,7},{4,8}} \hspace{1in} \pi_7(T) = \tikztableausmall{{1,2,9},{3,5,6,8},{4,7}} \]
\end{Example}

An example of the full action of $\pi_i$ on tableaux representing the basis elements of the module $\mathcal{V}_{(2,2,3)}$
is given in Figure \ref{fig:module223}.  If we order the tableaux so that $S \prec T$ 
if there exists a permutation $\sigma$ such that $\pi_\sigma(T) = S$ then this figure shows that order is not a total order on
tableaux but that it can be extended to a total order arbitrarily.  We will use this total order in the following proof of
Theorem \ref{thm:repthry}.

We are now ready to prove Theorem \ref{thm:repthry}, which states that the characteristic of $\mathcal{V}_\alpha$ is $\fS_\alpha^*$.

\begin{proof}[Proof of Theorem \ref{thm:repthry}]
We construct a filtration of the module $\mathcal{V}_\alpha$ whose successive quotients are irreducible representations. 
Now, define $\mathcal{M}_T$ to be the linear span of all standard immaculate tableaux 
that are less than or equal to $T$. From the definition of the order and the fact that the $\pi_i$ 
are not invertible, we see that $\mathcal{M}_T$ is a module. Ordering the standard immaculate 
tableaux of shape $\alpha$ as $T_1, T_2, \dots, T_m$, then we have a filtration of $\mathcal{V}_\alpha$:
\[ 0 \subset \mathcal{M}_{T_1} \subset \mathcal{M}_{T_2} \subset \cdots \subset \mathcal{M}_{T_m} = \mathcal{V}_\alpha.\] 
The successive quotient modules $\mathcal{M}_{T_j}/\mathcal{M}_{T_{j-1}}$ are one dimensional, spanned by $T_j$; 
to determine which irreducible this is, it suffices to compute the action of the generators. From the description of 
$\mathcal{V}_\alpha$ above, we see that 
\begin{equation} \pi_i (T_j) = \begin{cases} 
      0 & \textrm{ if $i \in \mathcal{S}(D(T_j))$}\\
      T_j & \textrm{ otherwise}.
   \end{cases}
\end{equation}
This is the representation $[L_{D(T_j)}]$, whose characteristic is $F_{D(T_j)}$. 
Therefore $\mathcal{F}([\mathcal{V}_\alpha]) = \fS_\alpha^*$ by Proposition \ref{prop:FundamentalPositive}.
\end{proof}

We aim to prove that the modules we have constructed are indecomposable. 
We let $\SS_\alpha$ denote the super-standard tableau of shape $\alpha$, namely, the unique standard immaculate tableau with the first $\alpha_1$ letters in the first row, 
the next $\alpha_2$ letters in the second row, etc.  We will first need a few lemmas.

\todo{LS: I don't really like $S_\alpha$ for super standard tableau, we already have $\mathcal{S}$ for descent sets and $S_n$ for the symmetric group. I changed it to $SS$ which I also don't really like, but made a macro in case anyone can think of a better notation.

CB: Mathfrak to the rescue! I figured $\mathfrak{v}$ should generate $\mathcal{V}$. Objections?
}

\todo{MZ: The following proof is new.  It might need work to make as clear as possible. \\ LS: Wrote it in terms of Y-words instead (and added Remark 3.9)... not fully happy about what I wrote on the action commuting with the bijection, if anyone wants to take a look.
Commented Mike's proof in case anyone wants to go back to writing it in terms of tableaux.
\\
CB: Added my own proof, which I think is very simple. Left the others as comments.
}

\begin{Lemma}\label{lemma:cycgen}
The module $\mathcal{V}_\alpha$ is cyclicly generated by $\SS_\alpha$.
\end{Lemma}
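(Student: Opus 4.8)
The plan is to deduce this from the cyclic generation of the full permutation module $\mathcal{M}_\alpha$ and then descend to the quotient $\mathcal{V}_\alpha = \mathcal{M}_\alpha/\mathcal{N}_\alpha$. First I would record the word corresponding to the generator: under the bijection $\mathcal{T}$ the letter $j$ sits in row $w_j$, so the weakly increasing word $u = 1^{\alpha_1} 2^{\alpha_2} \cdots m^{\alpha_m}$ (where $m = \ell(\alpha)$) places $1,\dots,\alpha_1$ in the first row, then $\alpha_1+1,\dots,\alpha_1+\alpha_2$ in the second row, and so on; that is precisely the super-standard tableau, so $\mathcal{T}(u) = \SS_\alpha$. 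In particular $u$ is a $\mathcal{Y}$-word, hence $u \notin \mathcal{N}_\alpha$, and the image of $u$ in $\mathcal{V}_\alpha$ is the basis element $\SS_\alpha$.

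Next I would argue that $\mathcal{M}_\alpha$ is cyclically generated by $u$, which is immediate from the identification $\mathcal{M}_\alpha \cong \mathrm{Ind}^{H_n(0)}_{H_\alpha(0)}\!\left(L_{\alpha_1} \otimes \cdots \otimes L_{\alpha_m}\right)$ recorded above. The inducing module $L_{\alpha_1} \otimes \cdots \otimes L_{\alpha_m}$ is one-dimensional, hence cyclic, generated by $v_{\alpha_1} \otimes \cdots \otimes v_{\alpha_m}$; consequently the induced module is cyclically generated by $1 \otimes (v_{\alpha_1} \otimes \cdots \otimes v_{\alpha_m})$. Under the explicit isomorphism given above, which sends $\pi_\sigma \otimes (v_{\alpha_1} \otimes \cdots \otimes v_{\alpha_m})$ to $\pi_\sigma(1^{\alpha_1} 2^{\alpha_2} \cdots m^{\alpha_m})$, the generator (the case $\sigma = \id$) corresponds exactly to $u$. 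Therefore $\mathcal{M}_\alpha = H_n(0)\cdot u$.

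Finally, since $\mathcal{V}_\alpha$ is the quotient of $\mathcal{M}_\alpha$ by the submodule $\mathcal{N}_\alpha$ and the projection is a map of $H_n(0)$-modules, the image of a cyclic generator generates the quotient. Thus $\mathcal{V}_\alpha = H_n(0)\cdot\SS_\alpha$, as claimed. A convenient feature of routing the argument through $\mathcal{M}_\alpha$ is that the generation in $\mathcal{V}_\alpha$ is achieved without ever confronting the $\pi_i = 0$ (first-column) obstruction of \eqref{eq:actiontableaux}: given a target tableau $T$ with associated $\mathcal{Y}$-word $w$, one chooses $\sigma$ with $\pi_\sigma u = w$ in $\mathcal{M}_\alpha$, and projecting yields $\pi_\sigma \SS_\alpha = \overline{w} = T$ directly, since projection commutes with the action.

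The main obstacle is the cyclic generation of $\mathcal{M}_\alpha$ by $u$. If one preferred not to invoke the induced-module description, one would instead prove this combinatorially: for any word $w$ of content $\alpha$, exhibit a permutation $\sigma$ with $\pi_\sigma u = w$, for instance by inducting on the number of inversions of $w$ and repeatedly converting an ascent $w'_i < w'_{i+1}$ into the needed descent via $\pi_i$ (so that $\pi_i w' = s_i(w')$). Checking that this sorting procedure always terminates at $w$, and identifying $\sigma$ with the appropriate minimal-length coset representative, is the only step that requires genuine care.
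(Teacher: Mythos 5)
Your proof is correct, and it shares the paper's two-step skeleton: show $\mathcal{M}_\alpha$ is cyclically generated by the weakly increasing word $u = 1^{\alpha_1}2^{\alpha_2}\cdots m^{\alpha_m} = \mathcal{T}^{-1}(\SS_\alpha)$, then pass to the quotient $\mathcal{V}_\alpha = \mathcal{M}_\alpha/\mathcal{N}_\alpha$. The difference lies in how the key step is justified. The paper handles it combinatorially in one line: every word of content $\alpha$ is obtained from $u$ by applying the ``anti-sorting'' operators $\pi_i$ --- exactly the argument you relegate to your fallback paragraph, and which the paper leaves to the reader rather than carrying out the inversion-counting induction. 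You instead derive it structurally from the isomorphism $\mathcal{M}_\alpha \cong \operatorname{Ind}^{H_n(0)}_{H_\alpha(0)}\left(L_{\alpha_1}\otimes\cdots\otimes L_{\alpha_m}\right)$ that the paper records: the inducing module is one-dimensional, an induced module of a cyclic module is cyclic (every $h \otimes v$ equals $h\cdot(1\otimes v)$), and the stated isomorphism matches $1 \otimes v$ with $u$. Your route buys a complete argument from general module theory at the cost of leaning on the induced-module identification (itself asserted, not proved, in the paper); the paper's route is more elementary and self-contained in spirit but terser, with the sorting claim unverified. Your closing observation --- that routing through $\mathcal{M}_\alpha$ sidesteps the $\pi_i(T)=0$ first-column obstruction entirely --- is exactly the feature that makes both versions work, and is worth stating explicitly as you do.
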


\begin{proof}
The module $\mathcal{M}_\alpha$ is cyclically generated by $1^{\alpha_1} 2^{\alpha_2} \cdots k^{\alpha_k} = \mathcal{T}^{-1}(\SS_\alpha)$, which can be seen since every basis element of $\mathcal{M}_\alpha$ comes from an application of the anti-sorting operators $\pi_i$ on $1^{\alpha_1} 2^{\alpha_2} \cdots k^{\alpha_k}$.

$\mathcal{V}_\alpha$ is a quotient of $\mathcal{M}_\alpha$, and hence cyclicly generated by the same element.
\end{proof}

\begin{Lemma}\label{lemma:turkey}
If $P$ is a standard immaculate tableau of shape $\alpha$ such that $\pi_i(P) = P$ 
for all $i \in \{1,2,\cdots,n\} \setminus \mathcal{S}(\alpha)$
 then $P = \SS_\alpha$. 
In particular, if $P \neq \SS_\alpha$ then there exists an $i$ such that $\pi_i(\SS_\alpha) = \SS_\alpha$ but $\pi_i(P) \neq P$. 
\end{Lemma}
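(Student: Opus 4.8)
The plan is to first reinterpret the hypothesis purely in terms of descents. Reading off \eqref{eq:actiontableaux}, for a (nonzero) standard immaculate tableau $P$ one has $\pi_i(P)=P$ precisely when $i$ lies weakly below $i+1$, which is exactly the condition that $i$ is \emph{not} a descent of $P$; the other two cases give $0$ or the genuinely different tableau $s_i(P)$. Hence the hypothesis ``$\pi_i(P)=P$ for all $i\notin\mathcal{S}(\alpha)$'' is equivalent to saying that $P$ has no descent outside $\mathcal{S}(\alpha)$, i.e.\ $\mathcal{S}(D(P))\subseteq\mathcal{S}(\alpha)$. I would also record at the outset that $\SS_\alpha$ has its descents exactly at the row boundaries, so $D(\SS_\alpha)=\alpha$ and $\pi_i(\SS_\alpha)=\SS_\alpha$ iff $i\notin\mathcal{S}(\alpha)$; this is what makes the ``in particular'' clause immediate at the end.

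The heart of the argument is then a row-by-row induction showing that the containment $\mathcal{S}(D(P))\subseteq\mathcal{S}(\alpha)$ forces $P=\SS_\alpha$. Write $c_j=\alpha_1+\cdots+\alpha_j$, so that $\mathcal{S}(\alpha)=\{c_1,\dots,c_{m-1}\}$ and the ``blocks'' $B_j=\{c_{j-1}+1,\dots,c_j\}$ have $|B_j|=\alpha_j$. The claim is that row $j$ of $P$ is filled exactly by $B_j$ in increasing order. Assuming inductively that rows $1,\dots,j-1$ are completely filled by $B_1,\dots,B_{j-1}$, the smallest value not yet placed is $c_{j-1}+1$, and it is the minimum of all entries lying in rows $\ge j$. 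Being minimal in its own row it sits in the first column, and since the first column is strictly increasing, the smallest first-column entry among rows $\ge j$ occurs at position $(j,1)$; this pins $c_{j-1}+1$ to $(j,1)$, hence to row $j$. Now the interior positions $i=c_{j-1}+1,\dots,c_j-1$ all lie outside $\mathcal{S}(\alpha)$, so none is a descent, meaning each $i+1$ sits weakly above $i$; combined with the fact that rows $1,\dots,j-1$ are already full, this propagates $c_{j-1}+1,\dots,c_j$ one at a time into row $j$. Since row $j$ has only $\alpha_j=|B_j|$ cells, it is filled exactly, and the induction advances.

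Concluding, once every block $B_j$ fills its row we have literally reconstructed $\SS_\alpha$, giving $P=\SS_\alpha$ and proving the main statement. The ``in particular'' clause is then the contrapositive: if $P\ne\SS_\alpha$ the hypothesis must fail, so there is some $i\notin\mathcal{S}(\alpha)$ with $\pi_i(P)\ne P$, and for that $i$ we have $\pi_i(\SS_\alpha)=\SS_\alpha$ by the computation $D(\SS_\alpha)=\alpha$. The step I expect to be the only real obstacle is the localization of the minimal unplaced value at the top of the next row: one must combine the first-column strict-increase condition with the inductive fullness of the earlier rows to be certain $c_{j-1}+1$ cannot slip into a lower row. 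Everything else is bookkeeping on the descent versus ``weakly below'' dictionary.
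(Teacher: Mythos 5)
Your proof is correct and follows essentially the same route as the paper's: a row-by-row induction showing that having no descents outside $\mathcal{S}(\alpha)$ (equivalently, $\pi_i(P)=P$ for all $i\notin\mathcal{S}(\alpha)$) forces each row of $P$ to agree with $\SS_\alpha$. The paper's version is terser; your explicit localization of $c_{j-1}+1$ at position $(j,1)$ via the strictly increasing first column is precisely the detail the paper leaves implicit in ``in a similar manner.''
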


\begin{proof}
If $\pi_i(P) = P$, then $i$ must be in the cell to the left of $i+1$ or in a row below $i+1$. 
The fact that $\pi_i(P) = P$ for all $i \in \{ 1,2,\dots,\alpha_1-1\}$ implies that the first row of $P$ agrees with $\SS_\alpha$. 
In a similar manner, we see that the second rows must agree. 
Continuing in this manner, we conclude that $P  = \SS_\alpha$.
\end{proof}

\begin{Theorem}
For every $\alpha \models n$, $\mathcal{V}_\alpha$ is an indecomposable representation of $H_n(0)$.
\end{Theorem}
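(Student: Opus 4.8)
The plan is to show that the endomorphism algebra $\End_{H_n(0)}(\mathcal{V}_\alpha)$ contains no idempotents other than $0$ and the identity; for a finite-dimensional module this is equivalent to indecomposability, since a decomposition $\mathcal{V}_\alpha = A \oplus B$ into submodules would yield the projection onto $A$, an idempotent endomorphism, which must then be $0$ or $\id$, forcing $A = 0$ or $A = \mathcal{V}_\alpha$. So let $\phi$ be an idempotent in $\End_{H_n(0)}(\mathcal{V}_\alpha)$. By Lemma \ref{lemma:cycgen}, $\mathcal{V}_\alpha$ is cyclically generated by the super-standard tableau $\SS_\alpha$, so $\phi$ is completely determined by the single vector $\phi(\SS_\alpha)$.

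The key observation is that $\phi(\SS_\alpha)$ must lie in a very small subspace. Write $J = \{1,\dots,n-1\} \setminus \mathcal{S}(\alpha)$. A direct check from (\ref{eq:actiontableaux}) shows that $\pi_i(\SS_\alpha) = \SS_\alpha$ precisely when $i \in J$ (these are exactly the positions where $i$ and $i+1$ lie in the same row of $\SS_\alpha$). Since $\phi$ commutes with the $H_n(0)$-action, $\pi_i \phi(\SS_\alpha) = \phi(\pi_i \SS_\alpha) = \phi(\SS_\alpha)$ for all $i \in J$; that is, $\phi(\SS_\alpha)$ belongs to the fixed space $\mathcal{V}_\alpha^{J} := \{v : \pi_i v = v \text{ for all } i \in J\}$.

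I would then establish the crucial claim that $\mathcal{V}_\alpha^{J}$ is spanned by $\SS_\alpha$. Expand $v = \sum_T c_T\, T$ in the tableau basis. The argument is coefficient extraction: if for some $i \in J$ we have $\pi_i(T_0) \neq T_0$, then no basis tableau maps onto $T_0$ under $\pi_i$. Indeed, $\pi_i(T) \in \{0, T, s_i(T)\}$, so the only candidates are $T_0$ itself (excluded, since $\pi_i$ does not fix it) and $s_i(T_0)$; but when $\pi_i(T_0) = s_i(T_0)$ the tableau $s_i(T_0)$ is of fixed type (so $\pi_i$ fixes it rather than returning $T_0$), and when $\pi_i(T_0) = 0$ (the case $i, i+1$ in the first column) the swap $s_i(T_0)$ is not even a valid standard immaculate tableau. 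Hence the coefficient of $T_0$ in $\pi_i v = v$ vanishes, giving $c_{T_0} = 0$. By the contrapositive of Lemma \ref{lemma:turkey}, every $T_0 \neq \SS_\alpha$ is moved by some $\pi_i$ with $i \in J$, so $c_{T_0} = 0$ for all such $T_0$, and $v$ is a scalar multiple of $\SS_\alpha$.

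With the claim in hand the proof closes quickly: $\phi(\SS_\alpha) = c\,\SS_\alpha$ for some scalar $c$, and idempotency $\phi^2 = \phi$ forces $c^2 = c$, so $c \in \{0,1\}$. If $c = 0$ then $\phi$ kills the cyclic generator, whence $\phi = 0$; if $c = 1$ then $\phi$ fixes the generator, whence $\phi = \id$ on all of $\mathcal{V}_\alpha$. Thus the only idempotents are trivial and $\mathcal{V}_\alpha$ is indecomposable. The step I expect to be the main obstacle is the one-dimensionality of $\mathcal{V}_\alpha^{J}$: the case analysis on how each $\pi_i$ acts (fix, swap, or zero), and in particular the verification that nothing maps onto a moved tableau $T_0$, is where the combinatorial care is required, and it is precisely where Lemma \ref{lemma:turkey} does the essential work.
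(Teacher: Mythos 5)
Your proof is correct and takes essentially the same approach as the paper: an idempotent endomorphism applied to the cyclic generator $\SS_\alpha$ (Lemma \ref{lemma:cycgen}), with Lemma \ref{lemma:turkey} supplying, for each $T_0 \neq \SS_\alpha$, a $\pi_i$ fixing $\SS_\alpha$ but moving $T_0$, and coefficient extraction killing $c_{T_0}$. The only difference is local: where you verify by case analysis on $\pi_i(T) \in \{0, T, s_i(T)\}$ that nothing maps onto a moved $T_0$, the paper gets this in one line from idempotency of $\pi_i$ (if $\pi_i T = T_0$ then $T_0 = \pi_i^2 T = \pi_i T_0 \neq T_0$, a contradiction).
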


\begin{proof}
We let $f$ be an idempotent module morphism from $\mathcal{V}_\alpha$ to itself. 
If we can prove $f$ is either the zero morphism or the identity, then $\mathcal{V}_\alpha$ is indecomposable \cite[Proposition 3.1]{Ja}. 

Suppose $f(\SS_\alpha) = \sum_T a_T T$. 
By Lemma \ref{lemma:turkey}, for any $P \neq \SS_\alpha$, there exists an $i$ such that $\pi_i (\SS_\alpha) = \SS_\alpha$ but $\pi_i(P) \neq P$. 
Since $f$ is a module map, 
\begin{equation}\label{eqn:boring}\sum_T a_T T = f(\SS_\alpha) = f(\pi_i\SS_\alpha) = \pi_if(\SS_\alpha) = \sum_T a_T \pi_i T.\end{equation} 

The coefficient of $P$ on the right-hand side of Equation \eqref{eqn:boring} is zero 
(if there was a $T$ such that $\pi_i T = P$ then $\pi_i T = \pi_i^2T  = \pi_i P \neq P$, a contradiction). 
Therefore $a_P = 0$ for all $P\neq \SS_\alpha$, so $f(\SS_\alpha) = \SS_\alpha$, or $f(\SS_\alpha) = 0$. 
Since $\mathcal{V}_\alpha$ is cyclicly generated by $\SS_\alpha$, this implies that either $f$ is the identity morphism or the zero morphism.
\end{proof}

\begin{figure}[h]\label{fig:module223}
\begin{center}
\includegraphics[width=3in]{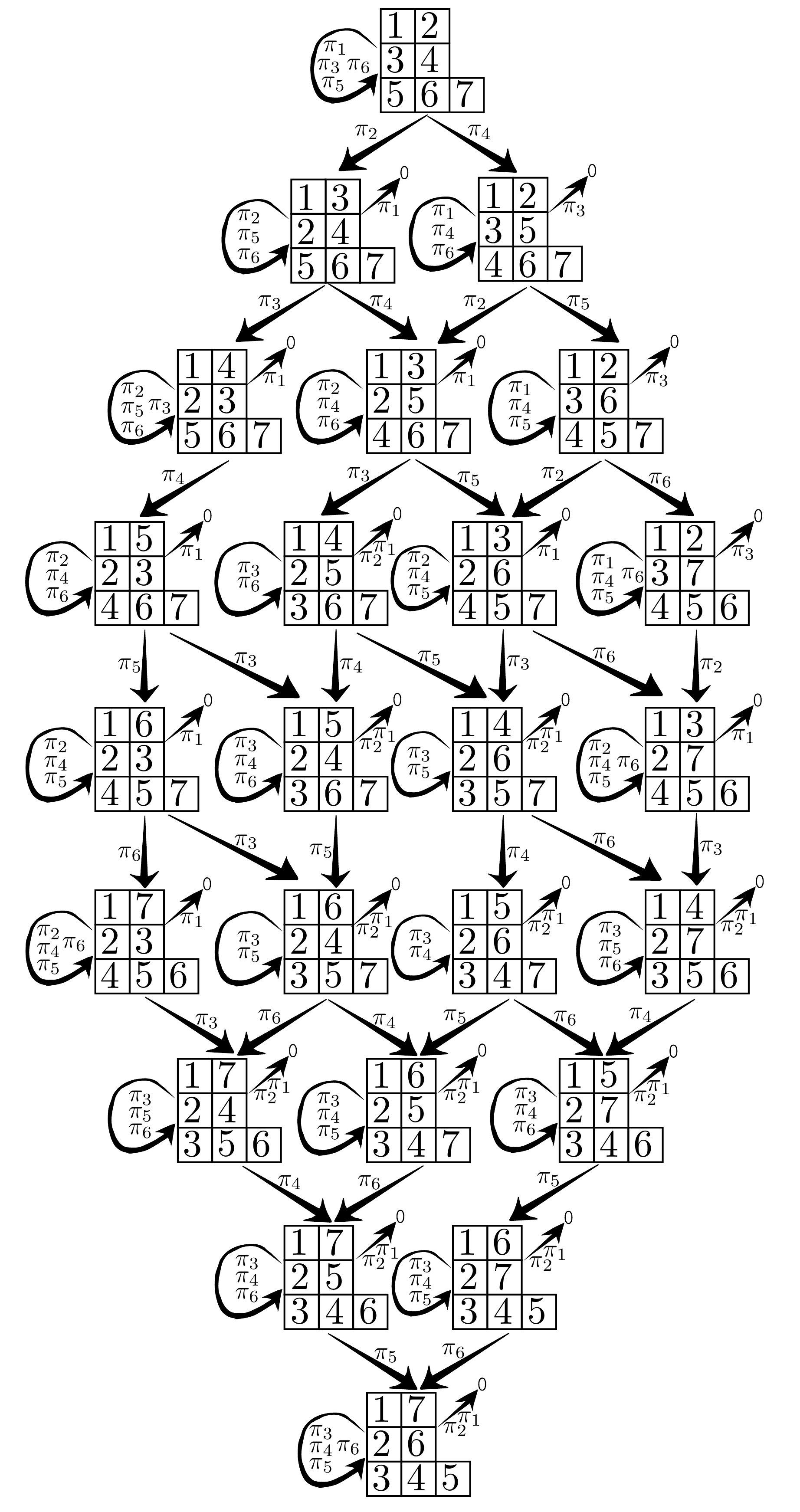}
\end{center}
\caption{A diagram representing the action of the generators $\pi_i$ of $H_n(0)$ given in Equation \eqref{eq:actiontableaux}
on the basis elements of the module $\mathcal{V}_{(2,2,3)}$.}
\end{figure}

\end{document}